\numberwithin{equation}{section}
\newtheorem{theorem}{Theorem}[section]
\newtheorem{lemma}[theorem]{Lemma}
\newcommand{\cali}[1]{\mathscr{#1}}
\newcommand{\dist}{\mathop{\mathrm{dist}}\nolimits}
\newcommand{\vol}{\mathop{\mathrm{vol}}}
\newcommand{\ddc}{dd^c}
\newcommand{\DSH}{{\rm DSH}}
\newcommand{\Cc}{\cali{C}}
\newcommand{\Fc}{\cali{F}}
\newcommand{\B}{\mathbb{B}}
\newcommand{\C}{\mathbb{C}}
\newcommand{\R}{\mathbb{R}}
\newcommand{\norm}[1]{\lVert#1\rVert}
\title[Equilibrium measure of  meromorphic correspondence]{Regularity of the equilibrium measure for meromorphic correspondences}
\author{Tien-Cuong Dinh and Hao Wu}
\address{Department of Mathematics, National University 
of Singapore, 10 Lower Kent Ridge Road, Singapore 119076.}
\email{matdtc@nus.edu.sg; e0011551@u.nus.edu}
\thanks{This work is supported by 
	the NUS and MOE grants A-0004285-00-00 and MOE-T2EP20120-0010.}
\date{}
\begin{document}

\begin{abstract}
Let $f$ be a  meromorphic correspondence on a compact K\"ahler manifold $X$ of dimension $k$. Assume that its topological degree is larger than the  dynamical degree of order $k-1$. We obtain a quantitative  regularity of the equilibrium measure of $f$ in terms of its super-potentials.
\end{abstract}

\maketitle

\medskip

\noindent {\bf Classification AMS 2020}: 32H, 37F05, 32U40.

\medskip

\noindent {\bf Keywords:} meromorphic correspondence, equilibrium measure, dynamical degree, super-potential.

\medskip


\section{Introduction}\label{sec-intro}

Let $X$ be a compact K\"ahler manifold of dimension $k$ endowed with a K\"ahler form $\omega$ normalized so the the integral of $\omega^k$ on $X$ is 1. 
A \textit{meromorphic correspondence} $f$ on $X$ is given by a finite holomorphic chain $\Gamma:=\sum_{j=1}^m \Gamma_j$ such that 
\begin{enumerate}
	\item  for each $j$, $\Gamma_j$ is an irreducible analytic subset (may not be distinct) of dimension $k$ in $X\times X$;
	
	\item  the natural projections $\pi_i:X\times X\to X$, $i=1,2$, restricted to each $\Gamma_j$, are surjective and generically finite.
\end{enumerate}
We call $\Gamma$ the \textit{graph} of $f$. The action of $f$ is given by 
$$f(A):=\pi_2\big( \pi_1^{-1}(A) \cap \Gamma\big) \quad \text{and}\quad  f^{-1}(A):=\pi_1\big( \pi_2^{-1}(A)\cap\Gamma\big) \quad\text{for every set}\quad A\subset X.$$ 

We see that a meromorphic correspondence can be viewed as a meromorphic multi-valued map from $X$ to itself.  We call the two sets
$$I_1(f):=\{x\in X:\, \dim \pi_1^{-1}(x) \cap \Gamma>0\} \quad \text{and} \quad I_2(f):=\{x\in X:\, \dim \pi_2^{-1}(x)\cap \Gamma >0\} $$
the \textit{first and second indeterminacy sets} of $f$. They are of codimension at least $2$ in $X$. If $I_1(f)$ is empty, then $f$ is a \textit{holomorphic correspondence}. 
If for a generic point $x\in X$, $\pi_1^{-1}(x)$ contains only one point and $\Gamma$ is irreducible, then $f$ is a dominant meromorphic map.

To define $f^n:=f\circ \cdots \circ f$ ($n$ times), we compose $f$ with itself outside an analytic set and compactify the graph in order to get the graph of $f^n$. We can also define the push-forward and pull-back of smooth forms under the actions of $f^n$, see Section \ref{sec-pull}.
For $0\leq p\leq k$, it is known that the limit 
$$d_p:=\lim_{n\to\infty} \|(f^n)^*(\omega^p)\|^{1/n}$$
exists and is a bi-meromorphic invariant, see \cite{dinh-sibony-CMH}. We call it the \textit{dynamical degree of order $p$} of $f$. We have that $d_0$ is the number of points in a generic fiber of the map $\pi_1 |_\Gamma$ and $d_k$ is the one of $\pi_2|_\Gamma$ (counting multiplicity). We also call $d_k$ the \textit{topological degree} of $f$.  For more details on dynamical degrees and K\"ahler geometry, the readers may refer to \cite{Demailly-regular,DNT-ind,Ma-book,Truong}.
\medskip

Sibony and the first author \cite{dinh-sibony-CMH} proved that if $f$ is a meromorphic correspondence on $X$ with a dominant topology degree, i.e., $d_k>d_{k-1}$, then $f$ admits an \textit{equilibrium measure}. More precisely, there exists a unique probability measure $\mu$ on $X$ such that $d_k^{-n}(f^n)^*(\omega^k)$ converge to $\mu$ weakly and it satisfies  $ f^* (\mu)=d_k\mu$, see also \cite{guedj}.

\medskip
Recall that a \textit{quasi-plurisubharmonic} (\textit{quasi-p.s.h.}\ for short) function on $X$ is locally the difference of a plurisubharmonic function and a smooth one. A subset of $X$ is called \textit{pluripolar} if it is contained in $\{\phi=-\infty\}$ for some  quasi-plurisubharmonic function $\phi$, see also \cite{Vu-tams}. The following two sets are compact in $L^p(X)$ for $1\leq p<\infty$,
$$\{  \phi \, \text{ quasi-p.s.h.}: \, \ddc\phi\geq -\omega  ,\quad \max \phi=0\}  $$
$$\Big\{ \phi \, \text{ quasi-p.s.h.} : \, \ddc\phi\geq -\omega  , \quad \int_X \phi \,\omega =0 \Big\}. $$

Let $\DSH(X)$ denote the space of all d.s.h.\ functions on $X$, i.e., functions which are equal to a difference of two quasi-p.s.h.\  functions. We identify two d.s.h.\ functions if they are equal outside a pluripolar set. If $\phi$ is d.s.h., there are positive closed $(1,1)$-currents $T^\pm$ on $X$ such that $\ddc\phi=T^+-T^-$.
Since these two currents are cohomologous, they have the same mass. We define a norm on $\DSH(X)$ by
$$\|\phi\|_\DSH:=\Big|\int_X \phi \,\omega \Big| + \inf\|T^\pm\|,$$
where the infimum is taken over all $T^\pm$ as above. This norm induces a distance on $\DSH(X)$. But in this article, we will use the $L^1$-distance on this space.

For every d.s.h.\ function $\phi$ on $X$, there are quasi-p.s.h.\ functions $\phi^\pm$ and a number $m$, such that $\phi=\phi^+-\phi^- +m$, $\max \phi^\pm=0$, $\ddc \phi^\pm \geq -c\norm{\phi}_{\DSH}\omega$, $\norm{\phi^\pm}_{\DSH}\leq c\norm{\phi}_{\DSH}$ and $|m|\leq c\norm{\phi}_{\DSH}$, where $c>0$ is a constant independent of $\phi$, see \cite{dinh-sibony-CMH}.
\medskip

The following is our main result, which is already new when $f$ is a dominant meromorphic map or a correspondence on a Riemann surface. 

\begin{theorem}\label{main-theorem}
Let $f$ be a  meromorphic correspondence on a compact K\"ahler manifold $(X,\omega)$ of dimension $k$. Let $d_p$ denote its dynamical degree of order $p$. Assume that $d_k>d_{k-1}$. Let $\mu$ denote the equilibrium measure of $f$. Then there is a number $\gamma>0$ such that if $\Fc\subset \DSH(X)$ is a bounded subset of $\DSH(X)$ we have for all $\phi,\psi\in \Fc$,
$$\Big|\int_X \phi \,d\mu-\int_X \psi \,d\mu\Big|\leq c \big(\log^\star \norm{\phi-\psi}_{L^1}\big)^{-\gamma},$$
where $c=c(\Fc)>0$ is a constant independent of $\phi$ and $\psi$ and $\log^\star(t):=1+|\log t|$.
\end{theorem}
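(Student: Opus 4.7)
The plan is to combine the invariance identity $f^*\mu=d_k\mu$ with the super-potential calculus of Dinh--Sibony to extract a quantitative continuity rate for the linear functional $\chi\mapsto\int_X\chi\,d\mu$ on bounded subsets of $\DSH(X)$. First I reduce to the case where $\phi,\psi$ have zero $\omega^k$-mean, so that $\chi:=\phi-\psi$ belongs to a bounded family of mean-zero d.s.h.\ functions. Using the decomposition $\chi=\chi^+-\chi^-$ recalled in the introduction, the positive $(1,1)$-currents appearing in $\ddc\chi^\pm$ have uniformly bounded mass. Since $|\int_X\chi\,\omega^k|\leq\|\chi\|_{L^1}$, the problem reduces to estimating $\int_X\chi\,d(\mu-\omega^k)$ in terms of $\|\chi\|_{L^1}$.

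The central step is to realize $\mu-\omega^k$ as $\ddc T$ for a controlled $(k-1,k-1)$-current. Fix $S$ with $\ddc S=f^*\omega^k-d_k\omega^k$, which exists since the right-hand side has zero cohomology class. The telescoping identity
$$\frac{(f^{n+1})^*\omega^k}{d_k^{n+1}}-\frac{(f^n)^*\omega^k}{d_k^n}=\ddc\!\left(\frac{(f^n)^*S}{d_k^{n+1}}\right),$$
together with the convergence $d_k^{-n}(f^n)^*\omega^k\to\mu$, yields $\mu-\omega^k=\ddc T$ with $T:=\sum_{n\ge 0}d_k^{-(n+1)}(f^n)^*S$. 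Under the hypothesis $d_k>d_{k-1}$, the mass of the $(k-1,k-1)$-current $(f^n)^*S$ is controlled by $d_{k-1}^n$ up to polynomial factors, so the series converges in the super-potential sense.

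Integration by parts then gives $\int_X\chi\,d(\mu-\omega^k)=\langle T,\ddc\chi\rangle$. I split $T=T_N+R_N$, where $T_N:=\sum_{n<N}d_k^{-(n+1)}(f^n)^*S$. For the partial sum, pairing directly with $\ddc\chi^\pm$ produces a bound of the form $|\langle T_N,\ddc\chi\rangle|\leq C_N\|\chi\|_{L^1}$, where $C_N$ grows at most subexponentially in $N$ and reflects the amplification of constants under iterated pull-back by $f^n$. For the tail, the super-potential formalism and the ratio $d_{k-1}/d_k<1$ yield $|\langle R_N,\ddc\chi\rangle|\leq C_0 N^{-\gamma_0}$ uniformly on the bounded DSH family. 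Choosing $N\asymp\log^\star\|\chi\|_{L^1}$ balances the two estimates and produces the announced log-H\"older bound, with exponent $\gamma$ determined by the gap $\log(d_k/d_{k-1})$.

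The technical heart, and the main obstacle, is the tail estimate with an explicit polynomial decay in $N$: one has to control the super-potentials of $(f^n)^*S/d_k^{n+1}$ uniformly on the bounded DSH data, and to do this for meromorphic correspondences rather than just holomorphic maps. This forces a development of the super-potential calculus with uniform constants in the presence of indeterminacy sets and the pull-back of $(k-1,k-1)$-currents beyond the smooth category. The statement is already new when $k=1$ because, even for correspondences on Riemann surfaces, obtaining the quantitative contraction of pull-backs on DSH test data requires the gap $d_k>d_{k-1}$ in an essential way.
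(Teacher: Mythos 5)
Your overall skeleton (telescoping the invariance relation $f^*\mu=d_k\mu$, splitting the resulting series at a cutoff $N\asymp\log^\star\|\phi-\psi\|_{L^1}$, and balancing a partial sum against a small tail) is essentially the dual of what the paper does, but the key quantitative input is wrong, and it is precisely the step you treat as routine. Your partial-sum bound $|\langle T_N,\ddc\chi\rangle|\leq C_N\|\chi\|_{L^1}$ with $C_N$ subexponential cannot hold: by duality $\langle T_N,\ddc\chi\rangle=\int_X\chi\,\big(d_k^{-N}(f^N)^*(\omega^k)-\omega^k\big)$, and the density of $(f^N)^*(\omega^k)$ with respect to $\omega^k$ is only $L^1$, not $L^\infty$ --- it blows up along the postcritical set. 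So no estimate linear in $\|\chi\|_{L^1}$ is available for $\chi$ ranging over a bounded DSH family. The correct single-step statement, which is the technical heart of the paper (Lemma \ref{theta-holder}), is that $\phi\mapsto f_*(\phi)$ is only $\theta$-H\"older for the $L^1$-distance on bounded subsets of $\DSH(X)$, i.e.\ $\|f_*(\chi)\|_{L^1}\lesssim\|\chi\|_{L^1}^{\theta}$ with $\theta<1$ in general; proving it requires the Lojasiewicz inequality near the critical set of the projection from the desingularized graph, together with the exponential integrability (moderateness) of $\omega^k$ on bounded DSH families. Iterating gives exponent $\theta^N$, and it is exactly this compounding H\"older loss --- not the tail --- that forces the final rate down to $\log^\gamma$; the paper packages the iteration in an abstract lemma (Lemma \ref{log-continuous}) on series $\sum\delta^{-n}u(\Lambda^n(x))$ with $\Lambda$ a $\theta$-H\"older self-map of a bounded metric space.

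By contrast, the tail, which you single out as the main obstacle, is the soft part: after replacing $f$ by an iterate, the mass estimate $\|(f^n)_*(S)\|\leq c(d_{k-1}+\epsilon)^n$ for positive closed $(1,1)$-currents (a cohomological consequence of $d_{k-1}<d_k$ plus regularization of currents) makes $\delta d_k^{-1}f_*$ a contraction in the DSH-norm on the hyperplane $\{\int_X\phi\,d\mu=0\}$, so the tail decays geometrically in $N$, better than your claimed $N^{-\gamma_0}$. To repair your argument you would need to replace the linear partial-sum bound by the iterated H\"older bound $\lesssim\|\chi\|_{L^1}^{\theta^N}$ and redo the balancing, at which point you essentially recover the paper's proof in dual form.
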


Recall that the functional $\phi\mapsto \int_X \phi \,d\mu$ with $\phi$ d.s.h.\ is called the \textit{super-potential} of $\mu$. The above theorem says that the super-potential of the equilibrium measure is $\log^\gamma$-continuous in the above sense.  It is known that $\log^\gamma$-continuity is useful in complex dynamics, see \cite{BD-state,DKW:LLT}. For $k=1$, the result says that $\mu$ has locally $\log^\gamma$-continuous potentials.

 In Section \ref{sec-pull}, we will discuss the action of $f$ on d.s.h.\ functions and positive closed currents. The proof of Theorem \ref{main-theorem} will be given in Section \ref{sec-proof}.

\section{Pull-back of d.s.h.\ functions and $(1,1)$-currents} \label{sec-pull}

We refer to \cite{Demailly-regular,dinh-sibony-regular,dinh-sibony-CMH,pull-back} for properties discussed in this section.

Let $f$ be a meromorphic correspondence on $X$ which is given by a chain $\Gamma$ as above.
Since $\pi_1$ and $\pi_2$ are holomorphic proper submersions, $(\pi_1)_*$ and  $\pi_2^*$ are well-defined on the space of currents. So for a current $T$ on $X$,
we define the pull-back $f^*(T)$ by 
$$f^*(T):=(\pi_1)_*\big( \pi_2^*(T)\wedge [\Gamma]\big)$$
if $\pi_2^*(T)\wedge [\Gamma]$ is well-defined. Here, $[\Gamma]$ is the positive closed current of integration on $\Gamma$. In particular, if $T$ is a smooth form, then $f^*(T)$ is well-defined and it is an $L^1$-form.

Let $T$ be a smooth $(p,q)$-form. It is not hard to see that $f^*(T)$ is closed if $T$ is closed, and $f^*(T)$ is exact if $T$ is exact. Therefore, $f^*$ induces a linear map  on Hodge cohomology
 $$f^* : H^{p,q}(X,\C)\longrightarrow  H^{p,q}(X,\C).$$
 In general, we do not have $(g^n)^*\circ (g^m)^*\neq (g^{n+m})^*$. When the equality holds for $p=q=1$, $f$ is said to be \textit{algebraically stable} in the sense of Fornaess-Sibony \cite{For-Sib,sibony-panor}.

\medskip

For a positive  $(p,p)$-current $T$ on the compact K\"ahler manifold $(X,\omega)$ of dimension $k$, recall that the mass of $T$ is defined by $\norm{T}:=\int_X T\wedge \omega^{k-p}$. If $T$ is closed, the mass of $T$ only depends on the class of $T$ in $H^{p,p}(X,\C)$.

By the above definition, we have 
$$\norm{f^*(\omega^p)}=\int_X f^*(\omega^p)\wedge \omega^{k-p}=\int_{\Gamma} \pi_2^*(\omega^p)\wedge \pi_1^*(\omega^{k-p})=\int_X f_*(\omega^{k-p})\wedge \omega^p=\norm{f_*(\omega^{k-p})}.$$
Observe that if $S_1$ and $S_2$ are two positive closed $(p,p)$-forms in the same cohomology class, then $\norm{f^*(S_1)}=\norm{f^*(S_2)}$. Thus, the norm of $f^*$ acting on $H^{p,p}(X,\C)$ is related to the action $f^*$ on positive closed currents.
Since $H^{0,0}(X,\C)\simeq \C$ and  $H^{k,k}(X,\C) \simeq \C$, $f^*$ acting on $H^{0,0}(X,\C)$ is the multiplication by $d_0$ and its action on  $H^{k,k}(X,\C)$ is the multiplication by $d_k$. Here, $d_0$ and $d_k$ are the dynamic degrees defined in the introduction.
\medskip

For simplicity, in what follows, we assume that $\Gamma$ is irreducible. The general case can be deduced from this case.
 Let $\pi:\widetilde \Gamma\to \Gamma \subset X^2$ be a desingularization of $\Gamma$, which can be obtained by a finite number of blow-ups of $X^2$ over the singularities of $\Gamma$. By Blanchard \cite{Blanchard}, $\widetilde \Gamma$ is a compact K\"ahler manifold. Define $\Pi_i:=\pi_i |_\Gamma \circ \pi$. 
Then $(\Pi_1)_*:\DSH(\widetilde \Gamma)\to \DSH(X)$ and $\Pi_2^*:\DSH(X)\to \DSH(\widetilde \Gamma)$ are well-defined, as well as $f^*=(\Pi_1)_*\circ \Pi_2^*$  on  $\DSH(X)$. In particular, we can define $\pi_2^* (\phi) \wedge [\Gamma]:=\pi_*( \Pi_2^*( \phi) )$ for $\phi\in \DSH(X)$. This definition is independent of the choice of $\pi$.

Now let  $T$ be a positive closed $(1,1)$-current. We can take a real smooth $(1,1)$-form $S$ which is in the same cohomology class of $T$. Then there exists a d.s.h.\ function $\phi$ such that $\ddc \phi=T-S$ and we have 
$$ \pi_2^*(T)\wedge [\Gamma]=\pi_2^*(\ddc \phi +S)\wedge [\Gamma]=\pi_2^*(\ddc \phi)\wedge [\Gamma] +\pi_2^*(S)\wedge [\Gamma] =\ddc(  \pi_2^*(\phi)  [\Gamma]) +  \pi_2^*(S)\wedge [\Gamma]. $$
In the last sum, the first term is well-defined from our discussion above, and the second one is also well-defined since $S$ is smooth. Hence $\pi_2^*(T)\wedge [\Gamma]$ is well-defined. So we can define 
$$f^*(T):=(\pi_1)_*\big( \ddc(  \pi_2^*\phi \wedge  [\Gamma]) +  \pi_2^*(S)\wedge [\Gamma] \big).  $$
 Clearly, this definition is independent of the choices of $\phi$ and $S$. It depends continuously on $T$ and we have $\norm{f^*(T)}=\norm{f^*(S)}$.
 
Similarly, we can define the push-forward $f_*$ on d.s.h.\ functions and positive closed $(1,1)$-currents as above.
\medskip

Since $\Pi_1$ is holomorphic and proper, it is not hard to see  that the push-forward operator $(\Pi_1)_*:\DSH(X)\to \DSH(\widetilde\Gamma)$ is bounded with respect to the $\DSH$-norm.
The pull-back operator $\Pi_2^*:\DSH(\widetilde \Gamma)\to \DSH(X)$ is bounded as well by the next lemma.

\begin{lemma}\label{pull-back-bounded}
	Let $g:X\to Y$ be a surjective holomorphic map between two compact K\"ahler manifolds.  Then the pull-back operator $g^*:\DSH(Y)\to \DSH(X)$ is bounded with respect to the $\DSH$-norm.
\end{lemma}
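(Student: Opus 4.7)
The plan is to decompose any $\phi \in \DSH(Y)$ via the canonical writing $\phi = \phi^+ - \phi^- + m$ recalled at the end of the introduction, with $\phi^\pm$ quasi-p.s.h., $\max \phi^\pm = 0$, $\ddc \phi^\pm \geq -c\|\phi\|_\DSH\, \omega_Y$, and $|m|\leq c\|\phi\|_\DSH$, and to track how each summand transforms under $g^*$. The one piece of input needed from the geometry of $g$ is a constant $C_0 \geq 1$ with $g^*\omega_Y \leq C_0\,\omega_X$ as smooth $(1,1)$-forms on $X$; this exists because $g^*\omega_Y$ is smooth and $X$ is compact. Pull-back is then compatible with the decomposition: $g^*\phi^\pm$ is quasi-p.s.h.\ on $X$ (pull-back of a p.s.h.\ function by a holomorphic map is p.s.h.), $\max g^*\phi^\pm = \max \phi^\pm = 0$ by surjectivity of $g$, and $\ddc(g^*\phi^\pm) = g^*(\ddc \phi^\pm) \geq -c C_0 \|\phi\|_\DSH\, \omega_X$.

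With this in hand I would bound the two pieces of $\|g^*\phi\|_\DSH$ separately. For the mass piece, set $T^\pm := \ddc(g^*\phi^\pm) + c C_0 \|\phi\|_\DSH\, \omega_X$. These are positive closed $(1,1)$-currents on $X$ satisfying $T^+ - T^- = \ddc(g^*\phi)$, so they are admissible in the definition of the norm. Since $g^*\phi^\pm \in L^1(X)$ is a globally defined function, $\ddc(g^*\phi^\pm)$ is exact, hence the cohomology class of $T^\pm$ equals $c C_0 \|\phi\|_\DSH\, [\omega_X]$; therefore $\|T^\pm\|$ is linear in $\|\phi\|_\DSH$ with a constant depending only on $X$.

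For the integral piece $\bigl|\int_X g^*\phi \cdot \omega_X^{\dim X}\bigr|$, the key observation is that each of the functions $(c C_0 \|\phi\|_\DSH)^{-1} g^*\phi^\pm$ lies in the compact family of quasi-p.s.h.\ functions $\psi$ on $X$ satisfying $\ddc\psi \geq -\omega_X$ and $\max \psi = 0$ recalled in the introduction, which is uniformly bounded in $L^1(X)$. Multiplying through by the scaling factor and adding the trivial contribution of $m$ gives a bound by a constant times $\|\phi\|_\DSH$. Summing the two contributions yields $\|g^*\phi\|_\DSH \leq C\|\phi\|_\DSH$ with $C$ depending only on $g$ and the chosen Kähler metrics. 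The argument is essentially mechanical; the only points deserving a word of care are the use of surjectivity of $g$ to retain the normalization $\max \phi^\pm = 0$ after pull-back, and the exactness of $\ddc$ of a global quasi-p.s.h.\ function on the compact manifold $X$, neither of which presents a real obstacle.
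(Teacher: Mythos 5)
Your proposal is correct and follows essentially the same route as the paper: decompose $\phi=\phi^+-\phi^-+m$ via the canonical writing, observe that $g^*\phi^\pm$ remain quasi-p.s.h.\ with $\max g^*\phi^\pm=0$ and $\ddc g^*\phi^\pm\geq -c\,g^*(\omega_Y)\gtrsim-\omega_X$, and conclude from the compactness of the normalized family of quasi-p.s.h.\ functions. The only difference is cosmetic: the paper phrases the conclusion as $g^*\phi$ lying in a compact subset of $\DSH(X)$, whereas you spell out the resulting mass and $L^1$ bounds explicitly.
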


\begin{proof}
	Let $\phi\in \DSH(Y)$ with $\norm{\phi}_{\DSH}=1$. It enough to show that $g^*(\phi)$ is contained in a compact subset of $\DSH(X)$. We write $\phi=\phi^+-\phi^-+m$ for quasi-p.s.h.\ functions $\phi^\pm$, where  $\max \phi^\pm=0$, $\ddc \phi^\pm \geq -c\omega$, $\norm{\phi^\pm}_{\DSH}\leq c$ and $|m|\leq c$, for some constant $c>0$  independent of $\phi$. We have $g^*(\phi)=g^*(\phi^+)-g^*(\phi^-) +m$. Moreover, $\ddc g^*(\phi^\pm)\geq -c g^*(\omega)$ and   $\max g^*(\phi^\pm)=0$. 
Therefore, $g^*(\phi^\pm)$ are in a compact subset of $L^1(X)$. Combining with $|m|\leq c$, we deduce that $g^*(\phi)$ is also in a compact subset of $\DSH(X)$.
\end{proof}

\section{Proof of the main theorem}\label{sec-proof}

We start the proof of Theorem \ref{main-theorem} with the following lemma.

\begin{lemma}\label{log-continuous}
Let $B$ be a metric space of finite diameter. Let $u:B\to \R$ be a Lipschitz function with Lipschitz constant $c_1$, and let $\Lambda:B\to B$ be a $\theta$-H\"older map ($0<\theta\leq 1$) with $\norm{\Lambda}_{\Cc^{0,\theta}}=c_2$. Let $\delta>1$ be a constant. Then the function 
$$v(x):=\sum_{n=0}^\infty \delta^{-n} u(\Lambda^n(x))$$
is $\log^\gamma$-continuous for some constant $\gamma>0$. That is, we have 
$$|v(x)-v(y)|\leq c \big(\log^\star\dist(x,y)\big)^{-\gamma} \quad\text{for all}\quad x,y\in B,$$
where $c>0$ is a constant depending on $c_1,c_2,\theta$ and $\delta$.
\end{lemma}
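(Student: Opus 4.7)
Since $B$ has finite diameter and $u$ is $c_1$-Lipschitz, $u$ is bounded; set $M:=\sup_B|u|<\infty$. The series defining $v$ then converges uniformly, so $v$ is well-defined and bounded. I focus on the case $0<\theta<1$, which is the essential one (when $\theta=1$ the same splitting, combined with the trivial iteration $\dist(\Lambda^n x,\Lambda^n y)\leq c_2^n\dist(x,y)$, yields Lipschitz or H\"older continuity of $v$, both stronger than what is claimed). A routine induction from $\dist(\Lambda a,\Lambda b)\leq c_2\dist(a,b)^\theta$ gives the iterated H\"older estimate
$$\dist(\Lambda^n x,\Lambda^n y)\leq c_2^{(1-\theta^n)/(1-\theta)}\dist(x,y)^{\theta^n}\leq c_2^{1/(1-\theta)}\dist(x,y)^{\theta^n}.$$

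Fix $x,y\in B$ and write $D:=\dist(x,y)$. Enlarging the final constant if necessary, I may assume $D$ is small, in particular $D<1$. For an integer $N\geq 1$ to be chosen, split the series at index $N$:
$$|v(x)-v(y)|\leq \sum_{n=0}^{N-1}\delta^{-n}\,|u(\Lambda^n x)-u(\Lambda^n y)|+2M\sum_{n=N}^{\infty}\delta^{-n}.$$
Combining the Lipschitz bound for $u$ with the iterated H\"older bound gives $|u(\Lambda^n x)-u(\Lambda^n y)|\leq c_1 c_2^{1/(1-\theta)}D^{\theta^n}$. Because $D<1$ and $0<\theta<1$, the sequence $D^{\theta^n}$ is increasing in $n$, so the head is dominated by a constant multiple of $D^{\theta^{N-1}}$, while the tail is dominated by a constant multiple of $\delta^{-N}$.

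Set $L:=\log^\star D=1+|\log D|$, so that (up to irrelevant constants) $D^{\theta^{N-1}}=e^{-L\theta^{N-1}}$. Choose $N:=\lceil \alpha\log L\rceil$ with $\alpha:=1/(2|\log\theta|)$. Then $\theta^{N-1}\asymp L^{-1/2}$, hence $L\theta^{N-1}\asymp L^{1/2}\to\infty$, and the head decays super-polynomially in $L$. Simultaneously $\delta^{-N}\leq L^{-\alpha\log\delta}$. Combining both estimates,
$$|v(x)-v(y)|\leq c\, L^{-\gamma}=c\,(\log^\star\dist(x,y))^{-\gamma}\quad\text{with}\quad \gamma:=\frac{\log\delta}{2|\log\theta|},$$
which is the required inequality, with $c$ depending only on $c_1,c_2,\theta,\delta$ and the diameter of $B$.

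\textbf{Main obstacle.} The delicate point is the calibration of $N$. The H\"older iteration contracts distances but its effectiveness collapses super-exponentially because $\theta^n\to 0$, whereas the tail of the series decays only geometrically. One must therefore push $N$ only out to $\log\log(1/D)$, not $\log(1/D)$, so that $\theta^{N-1}$ remains large enough for $D^{\theta^{N-1}}$ to still be small while $\delta^{-N}$ is kept to a manageable polynomial decay in $\log(1/D)$. This double-log scaling is precisely what prevents a polynomial modulus of continuity and forces a merely logarithmic one.
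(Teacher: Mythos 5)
Your proof is correct and follows essentially the same route as the paper: the same iterated H\"older bound $c_2^{1/(1-\theta)}\dist(x,y)^{\theta^n}$, the same head/tail splitting of the series, and the same double-logarithmic calibration $N\asymp \log\log(1/D)/(2|\log\theta|)$ yielding $\gamma=\log\delta/(2|\log\theta|)$. The only cosmetic differences are that the paper normalizes $u$ so that $\max u-\min u\le 1$ instead of carrying $M=\sup|u|$, and handles $\theta=1$ by citing the H\"older continuity lemma from the literature rather than rerunning the splitting.
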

\proof
When $\theta=1$, by \cite[Lemma 1.19]{dinh-sibony:cime}, $v$ is H\"older continuous, which clearly implies the desired inequality. Now we assume $\theta< 1$.
Define $\xi:=\dist(x,y)$. We only need to consider $\xi<1/4$ because otherwise the desired inequality holds for $c$ large enough.      We have $1+|\log\xi|\leq 2 |\log \xi|$. So it is enough to prove
\begin{equation}\label{vx-vy}
|v(x)-v(y)|\leq c |\log\xi|^{-\gamma}  \quad\text{for some}\quad c>0.
\end{equation}
\vskip 5pt

Fix a constant $c_3>1$ larger than $c_2$. By induction,
 we have for $n\geq 1$,
 \begin{align*}
 \big|\Lambda^n(x)-\Lambda^n(y)\big|\leq c_2^{1+\theta+\cdots+\theta^{n-1}}\xi^{\theta^n}\leq c_3^{1/( 1-\theta)}\xi^{\theta^n}.
\end{align*}

After multiplying $u$ by a constant, one can assume $\max u-\min u\leq 1$ and $c_1\leq 1$. 
 Therefore, using that $\xi^{\theta^n}\leq \xi^{\theta^m}$ for $n\leq m$, we get 
 \begin{align*}
	&|v(x)-v(y)|\leq\sum_{n=0}^\infty \delta^{-n}\big|u(\Lambda^n(x))-u(\Lambda^n(y))  \big|\\
	&\leq |u(x)-u(y)|+ \sum_{n=1}^N \delta^{-n} \big|u(\Lambda^n(x))-u(\Lambda^n(y))\big| + \sum_{n=N+1}^\infty \delta^{-n} \\
	&\leq \xi+ \sum_{n=1}^N \delta^{-n} c_3^{1/( 1-\theta)}\xi^{\theta^n}+ {\delta^{-N}\over \delta-1}\leq C_{1} (\xi^{\theta^N} +\delta^{-N}).
\end{align*}
Here, $N$ is any positive integer,  and $ C_{1}$ is some constant depending on $c_3, \theta$ and $\delta$. 

Now we take $$N:=\lfloor \log|\log \xi|/|2\log\theta| \rfloor \quad\text{and} \quad\gamma:=\log\delta/|2\log\theta|.$$
 Recall that $\theta<1$ and  $\xi<1/4$. So
\begin{align*}
\xi^{\theta^N}\leq \xi^{\theta^{ \log|\log \xi|/|2\log\theta|}} =\exp\Big[ \theta^{ \log|\log \xi|/|2\log\theta|} \log\xi   \Big]=\exp\Big[  |\log\xi|^{-1/2}\log\xi \Big]\\
=\exp\Big[ -|\log\xi|^{1/2} \Big] \leq C_{2} \exp\Big[-\gamma \log|\log\xi|  \Big]=C_{2} |\log\xi|^{-\gamma}
\end{align*}
for some constant $C_{2}$ depending on $\theta$ and $\delta$. For  the inequality above, we use  that $-|\log\xi|^{1/2}\leq A-\gamma\log|\log \xi|$ for some large constant $A$ depending on $\theta$ and $\delta$.

For the term $\delta^{-N}$, we have
$$\delta^{-N}\leq\delta^{-\log|\log \xi|/|2\log\theta|+1}=\delta\exp\Big[\big(-\log|\log \xi|/|2\log\theta|\big)   \cdot \log\delta     \Big]=\delta  \exp\Big[-\gamma \log|\log\xi|  \Big],$$
which is equal to $\delta  |\log\xi|^{-\gamma}$.
So we conclude that 
$$ |v(x)-v(y)|\leq C_{1} (C_{2} |\log\xi|^{-\gamma} + \delta |\log\xi|^{-\gamma}  )= C_{1}(C_{2}+\delta)|\log\xi|^{-\gamma}. $$
This gives  \eqref{vx-vy}  for $c=C_{1}(C_{2}+\delta)$  and finishes the proof of the lemma.
\endproof

From the discussions in Section \ref{sec-pull}, $f_*$ is a Lipschitz map on the space of d.s.h.\ functions with respect to the DSH-distance. Moreover, we need that with the $L^1$-distance on $\DSH(X)$, $f_*$ is H\"older on bounded subset $\DSH(X)$. This is the content of the next lemma. 

\begin{lemma}\label{theta-holder}
There is a number $0<\theta\leq 1$ such that $\phi\mapsto f_*(\phi)$ is $\theta$-H\"older with respect to the $L^1$-norm on every bounded subset of $\DSH(X)$. That is, for every bounded subset  $\Fc\subset\DSH(X)$, there exists a constant $c=c(\Fc)$ such that 
$$\big\|f_*(\phi)-f_*(\psi)\big\|_{L^1}\leq c \|\phi-\psi\|_{L^1}^\theta \quad \text{for} \quad \phi,\psi\in\Fc.$$
\end{lemma}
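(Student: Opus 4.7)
The plan is to decompose $f_*=(\Pi_2)_*\circ\Pi_1^*$ through the desingularization $\widetilde\Gamma$, treat the push-forward factor by a standard $L^1$-boundedness, and handle the pull-back factor through an $L^p$-estimate on a Jacobian density combined with H\"older interpolation. By linearity of $f_*$ it suffices to establish $\|f_*(u)\|_{L^1(X)}\leq c\|u\|_{L^1(X)}^\theta$ for $u:=\phi-\psi$ lying in a bounded subset of $\DSH(X)$. The push-forward factor is easy: since $\Pi_2$ is a generically finite surjective holomorphic map, its action on functions is summation over the multiplicity-weighted fibre, so $|(\Pi_2)_*v|\leq (\Pi_2)_*|v|$, and this combined with the smooth domination $\Pi_2^*(\omega^k)\leq C\,\widetilde\omega^k$ on the compact K\"ahler manifold $\widetilde\Gamma$ and the adjointness of push-forward with pull-back gives $\|(\Pi_2)_*v\|_{L^1(X)}\leq C\|v\|_{L^1(\widetilde\Gamma)}$. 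It therefore remains to prove $\|\Pi_1^*(u)\|_{L^1(\widetilde\Gamma)}\leq c\|u\|_{L^1(X)}^\theta$.

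Introduce the finite positive measure $\nu:=(\Pi_1)_*(\widetilde\omega^k)$ on $X$, so that a change of variables gives $\|\Pi_1^*(u)\|_{L^1(\widetilde\Gamma)}=\int_X|u|\,d\nu$. Because $\Pi_1$ is a generically finite surjective map between equidimensional compact K\"ahler manifolds, $\nu\ll\omega^k$ with density $\rho(y)=\sum_{x\in\Pi_1^{-1}(y)} J_{\Pi_1}(x)^{-1}$ off the branch locus, where $J_{\Pi_1}$ denotes the real Jacobian determinant of $\Pi_1$. The key technical step is to prove that $\rho\in L^p(X,\omega^k)$ for some $p>1$. The power-mean inequality and adjointness yield
$$\int_X\rho^p\,\omega^k\leq C_1\int_{\widetilde\Gamma} J_{\Pi_1}^{\,1-p}\,\widetilde\omega^k,$$
and since $J_{\Pi_1}$ is locally the square modulus of a holomorphic Jacobian determinant, $\log J_{\Pi_1}$ is quasi-plurisubharmonic and bounded from above on $\widetilde\Gamma$; Skoda's exponential integrability theorem then guarantees $\int_{\widetilde\Gamma} J_{\Pi_1}^{-(p-1)}\,\widetilde\omega^k<\infty$ once $p-1$ is sufficiently small. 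This $L^p$-integrability of the push-forward density is the main obstacle in the proof.

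Set $q:=p/(p-1)$. H\"older's inequality gives $\int_X|u|\,d\nu\leq\|\rho\|_{L^p(X)}\|u\|_{L^q(X)}$. For $u$ in a bounded subset of $\DSH(X)$, the decomposition $u=u^+-u^-+m$ recalled in the introduction together with Skoda's theorem applied to the quasi-p.s.h.\ components $u^\pm$ furnishes a uniform bound $\|u\|_{L^r(X)}\leq C_r$ for every $r<\infty$. The standard log-convexity of $L^p$-norms with $q_0=1$ and $q_1=r>q$ then yields
$$\|u\|_{L^q(X)}\leq \|u\|_{L^1(X)}^{1-t}\,\|u\|_{L^r(X)}^t,\qquad \tfrac{1}{q}=(1-t)+\tfrac{t}{r},$$
and hence $\|u\|_{L^q(X)}\leq C\,\|u\|_{L^1(X)}^{1-t}$. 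Chaining the estimates proves the lemma with H\"older exponent $\theta:=1-t=(r-q)/((r-1)q)\in(0,1)$, uniform over $u=\phi-\psi$ with $\phi,\psi$ in the given bounded subset $\Fc$.
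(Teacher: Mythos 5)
Your proposal is correct, and while its outer structure (reduction to $u=\phi-\psi$, the factorization $f_*=(\Pi_2)_*\circ\Pi_1^*$ through the desingularization $\widetilde\Gamma$, and the easy $L^1$-bound for $(\Pi_2)_*$ via $\Pi_2^*(\omega^k)\lesssim\widetilde\omega^k$) coincides with the paper's, the key estimate for $\Pi_1^*$ is handled by a genuinely different argument. The paper splits $\widetilde\Gamma$ into an $\epsilon$-neighbourhood $\widetilde\Sigma_\epsilon$ of the critical locus and its complement: off $\widetilde\Sigma_\epsilon$, a Lojasiewicz inequality for $\mathrm{Jac}\,\Pi_1$ gives $(\Pi_1)_*(\widetilde\omega^k)\lesssim\epsilon^{-\alpha}\omega^k$, while on $\widetilde\Sigma_\epsilon$ the volume bound $\vol(\widetilde\Sigma_\epsilon)\lesssim\epsilon$ combined with the moderate property of $\widetilde\omega^k$ and Jensen's inequality yields a contribution $\lesssim\epsilon^{1/2}$; optimizing in $\epsilon$ produces $\theta=1/(2\alpha+1)$. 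You instead prove once and for all that the density $\rho$ of $(\Pi_1)_*(\widetilde\omega^k)$ lies in $L^p(\omega^k)$ for some $p>1$ --- which reduces, via the power-mean inequality and change of variables, to the local integrability of $|\mathrm{Jac}\,\Pi_1|^{-2(p-1)}$ for $p-1$ small --- and then combine H\"older's inequality with the uniform $L^r$-bounds for bounded DSH families and log-convexity of $L^q$-norms. Both routes rest on an exponential-integrability input (the paper applies moderateness to $\Pi_1^*(|\phi|)$ on the bad set; you apply it, in the guise of Skoda, to $\log\mathrm{Jac}\,\Pi_1$ and to the family $\Fc$), but yours avoids Lojasiewicz altogether and cleanly separates the geometry of $\Pi_1$ (through $\|\rho\|_{L^p}$) from the function family (through $\|u\|_{L^r}$). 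Two small points you should make explicit in a final write-up: the real Jacobian equals $|\mathrm{Jac}\,\Pi_1|^2$ only up to a smooth positive factor coming from the charts and metrics, which is harmless for the integrability claim; and $\Pi_1^{-1}(\Sigma)$ is a proper analytic subset of $\widetilde\Gamma$, hence $\widetilde\omega^k$-null, so $\nu\ll\omega^k$ with the stated density even though $\Pi_1$ may have positive-dimensional fibres over $I_1(f)$.
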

\proof
Observe that we can replace $\Fc$ by $\Fc-\Fc$ so that it is enough to  prove that $\norm{f_*(\phi)}_{L^1}\lesssim \norm{\phi}_{L^1}^\theta$ for $\phi\in \Fc$. We can work with each component of $\Gamma$ separately. So, we can assume that $\Gamma$ is irreducible for simplicity.

 Let $\pi:\widetilde \Gamma\to \Gamma$ be a desingularization of $\Gamma$ as in Section \ref{sec-pull} and we fix a K\"ahler form $\widetilde \omega$ on $\widetilde\Gamma$. 
Define $\Pi_i:=\pi_i\circ \pi$. 
We have $\Pi_1^*:\DSH(X)\to \DSH(\widetilde \Gamma)$ and $(\Pi_2)_*:\DSH(\widetilde \Gamma)\to \DSH(X)$ and $f_*=(\Pi_2)_*\circ \Pi_1^*$.  From  Section \ref{sec-pull}, we know that all these maps  are well-defined and bounded with respect to the $\DSH$-norm.

\vskip 5pt

We first consider $(\Pi_2)_*$. Since $\Pi_2$ is  holomorphic, $(\Pi_2)^*(\omega^j)$ is smooth and hence $(\Pi_2)^*(\omega^j) \lesssim\widetilde\omega^j$. For any $\varphi\in \DSH(\widetilde\Gamma)$, we have 
$$\norm{(\Pi_2)_*(\varphi)}_{L^1} \leq\int_{X} (\Pi_2)_* (|\varphi|)\, \omega^k  = \int_{\widetilde\Gamma} |\varphi| \,(\Pi_2)^*(\omega^k) \lesssim \int_{\widetilde\Gamma} |\varphi| \,\widetilde\omega^k=\norm{\varphi}_{L^1}.$$

It remains to prove  $\|\Pi_1^*(\phi)\|_{L^1}\lesssim \|\phi\|_{L^1}^\theta$ for $\phi\in \Fc$.  Let $\Sigma$ denote the set of critical values of $\Pi_1$ and let $\widetilde\Sigma:=\Pi_1^{-1}(\Sigma)$. We also denote by $\widetilde\Sigma'$ the set of critical points of $\Pi_1$, which is the zero set $\{\mathrm{Jac} \Pi_1=0\}$ of the Jacobian of $\Pi_1$ in local holomorphic coordinates (we cover $\widetilde\Gamma$ by a finite number of local holomorphic charts). Clearly, $\widetilde\Sigma'\subset\widetilde\Sigma$. Using blow-ups, we can assume that  $\widetilde\Sigma$ is a finite union of smooth manifolds.
Let $\widetilde\Sigma_\epsilon$ be the $\epsilon$-neighbourhood of $\widetilde \Sigma$. We separate $\|\Pi_1^*(\phi)\|_{L^1}$ into two parts:
\begin{equation}\label{pi-l1}
\|\Pi_1^*(\phi)\|_{L^1}\leq \int_{\widetilde \Gamma\setminus \widetilde\Sigma_\epsilon} \Pi_1^*(|\phi|)\, \widetilde\omega^k + \int_{\widetilde\Sigma_\epsilon} \Pi_1^*(|\phi|)\, \widetilde\omega^k .
\end{equation}

\vskip5pt
\noindent\textbf{Claim 1.}
	We have for some $\alpha>0$, independent of $\phi$,  that
	$$\int_{\widetilde \Gamma\setminus \widetilde\Sigma_\epsilon} \Pi_1^*(|\phi|)\, \widetilde\omega^k \lesssim \epsilon^{-\alpha} \norm{\phi}_{L^1} \quad\text{for all} \quad \phi\in L^1(X).  $$

\proof[Proof of Claim 1]
Since $\widetilde\Sigma'$ is an analytic subset of $\widetilde\Gamma$ defined by $\mathrm{Jac}\Pi_1$, we can apply Lojasiewicz inequality \cite[Theorem 6.4]{semianalytic} to the function $\mathrm{Jac}\Pi_1$ and  get
$$\dist(x,\widetilde\Sigma')\lesssim  |\mathrm{Jac} \Pi_1 (x) |^A  \quad\text{for}\quad  x\in \widetilde \Gamma,$$
 where
$A>0$ is a   constant independent of $x$. Hence 
$$\dist(x,\widetilde\Sigma)\leq \dist(x,\widetilde\Sigma')  \lesssim |\mathrm{Jac} \Pi_1 (x) |^A.$$ So for $x\in \widetilde \Gamma\setminus \widetilde\Sigma_\epsilon$, one has $$|\mathrm{Jac} \Pi_1 (x) |^{-1}\lesssim \epsilon^{-1/A}.$$

For every $x\in \widetilde \Gamma\setminus \widetilde\Sigma_{\epsilon}$, one can find a small neighborhood $D\subset\widetilde \Gamma$ of $x$ such that $\Pi_1$ is injective on $D$ and hence, $\Pi_1$ is a biholomorphic map between $D$ and $\Pi_1(D)$. Therefore, at the point $\Pi_1(x)$, we have
$$(\Pi_1|_D)_*(\widetilde \omega^k)\lesssim |\mathrm{Jac} \Pi_1|^{-2} \omega^k \lesssim \epsilon^{-2/A}\omega^k.$$

Notice that $\Pi_1$ is a  holomorphic covering of degree $d_0$ outside $\widetilde\Sigma_{\epsilon}$. So 
$$(\Pi_1)_*(\widetilde \omega^k) \lesssim \epsilon^{-2/A}\omega^k  \quad\text{on}\quad \Pi_1(\widetilde\Gamma\setminus\widetilde\Sigma_{\epsilon}).$$
Taking $\alpha:=2/A$, we have
	$$\int_{\widetilde \Gamma\setminus \widetilde\Sigma_\epsilon} \Pi_1^*(|\phi|)\, \widetilde\omega^k =\int_{\Pi_1(\widetilde\Gamma\setminus\widetilde\Sigma_{\epsilon})} |\phi|\, (\Pi_1)_*(\widetilde \omega^k) \lesssim \int_X |\phi|\epsilon^{-\alpha}\,\omega^k   =\epsilon^{-\alpha} \norm{\phi}_{L^1}.  $$
	The proof of Claim 1 is finished.
\endproof

Recall that a positive measure $\nu$ on $X$ is said to be \textit{moderate} if $\nu$ has no mass on pluripolar sets and for any bounded family $\Fc$ of d.s.h.\ functions on $X$, there are constants $\alpha'>0$ and $c'>0$ such that 
$$\int_{X}e^{\alpha' |\psi|}d\nu\leq c'  \quad\text{for}\quad   \psi\in\Fc.$$
Equivalently, there are constants $\beta'>0$ and $c'>0$ such that 
$$\nu\{z\in X:|\psi(z)|>M\}\leq c'e^{-\beta' M}$$ for $M\geq 0$ and $\psi\in\Fc$.
The probability measure $\omega^k$  is moderate \cite{hormander}. The Lebesgue measure on a generic real submanifold of $X$ with dimension $\geq k$ is also moderate \cite{viet}. See also \cite{Kau-mic} for a local situation.

\vskip5pt
\noindent\textbf{Claim 2.}
	We have
	$$\int_{\widetilde\Sigma_\epsilon} \Pi_1^*(|\phi|)\, \widetilde\omega^k \lesssim \epsilon^{1/2}  \quad\text{for}\quad \phi\in \Fc.$$

\proof[Proof of Claim 2]
It is enough to consider $\epsilon$ small.
We first estimate the volume of $\widetilde\Sigma_\epsilon$. Recall that  $\widetilde\Sigma$ is a finite union of smooth manifolds.   So the volume of $\widetilde\Sigma_\epsilon$ is $\lesssim \epsilon$.
 Moreover, since $\widetilde\Sigma_\epsilon$ contains a ball of radius $\epsilon$, its volume is $\gtrsim\epsilon^{2k}$.

By Lemma \ref{pull-back-bounded}, $\Pi_1^*(|\phi|)$ is in a compact subset of $\DSH(\widetilde\Gamma)$ for $\phi\in \Fc$.
 Using the moderate property of $\widetilde\omega^k$ again, for  $\phi\in \Fc$, one has  $$\int_{\widetilde\Gamma}e^{\alpha''\Pi_1^*(|\phi|)}\,\widetilde\omega^k\leq c''$$ for some positive constants $\alpha''$ and $c''$ independent of $\phi$. In particular, $$\int_{\widetilde\Sigma_\epsilon}e^{\alpha''\Pi_1^*(|\phi|)}\,\widetilde\omega^k\leq c'' \quad\text{for all}\quad \phi\in \Fc.$$ 
 Applying Jensen's inequality to the probability measure $\widetilde\omega^k/\vol(\widetilde\Sigma_\epsilon)$ on $\widetilde\Sigma_\epsilon$, yields 
 $$\exp\Big[ \int_{\widetilde\Sigma_\epsilon} \alpha''\Pi_1^*(|\phi|) \,{\widetilde\omega^k\over \vol(\widetilde\Sigma_\epsilon)} \Big]  \leq \int_{\widetilde\Sigma_\epsilon}e^{\alpha''\Pi_1^*(|\phi|)}\, {\widetilde\omega^k\over \vol(\widetilde\Sigma_\epsilon)}  \leq {c''\over  \vol(\widetilde\Sigma_\epsilon)}. $$
 Therefore,
$$\int_{\widetilde\Sigma_\epsilon} \Pi_1^*(|\phi|)\, \widetilde\omega^k \leq {\vol(\widetilde\Sigma_\epsilon)\over \alpha''} \log  {c''\over  \vol(\widetilde\Sigma_\epsilon)}    \lesssim \epsilon\cdot  \log (\epsilon^{-2k})\lesssim \epsilon^{1/2}.$$
This finishes the proof of Claim 2.
\endproof

Using the above Claims and taking $\epsilon:=\norm{\phi}_{L^1}^{2/(2\alpha+1)}$, we get from \eqref{pi-l1} that $\|\Pi_1^*(\phi)\|_{L^1}\lesssim \norm{\phi}_{L^1}^{1/(2\alpha+1)}$ and finish the proof of Lemma \ref{theta-holder}.
\endproof

\begin{lemma}
	Let $\epsilon>0$ be any positive number. Then there is a constant $c=c(\epsilon)>0$ such that for any $n\geq 0$ and any positive closed $(1,1)$-current $S$ of mass $1$ we have
	$$\|(f^n)_*(S)\|\leq c(d_{k-1}+\epsilon)^n.$$
\end{lemma}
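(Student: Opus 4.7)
The approach is to reduce the estimate to a cohomological computation based on the adjointness between $(f^n)_*$ and $(f^n)^*$. First, I would use that $(f^n)_*(S)$ is a positive closed $(1,1)$-current, so its mass is given by
$$\|(f^n)_*(S)\| = \int_X (f^n)_*(S) \wedge \omega^{k-1}$$
and depends only on the cohomology class. The key identity to establish is the duality
$$\int_X (f^n)_*(S) \wedge \omega^{k-1} = \int_X S \wedge (f^n)^*(\omega^{k-1}).$$
For this I would work on a desingularization $\widetilde{\Gamma}_n$ of the graph of $f^n$ with holomorphic projections $\Pi_i^{(n)} \colon \widetilde{\Gamma}_n \to X$, so that $(f^n)_* = (\Pi_2^{(n)})_* \circ (\Pi_1^{(n)})^*$ and $(f^n)^* = (\Pi_1^{(n)})_* \circ (\Pi_2^{(n)})^*$. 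For smooth forms the identity is standard adjointness between push-forward and pull-back by a proper holomorphic map. For a general positive closed $S$, I would use the decomposition $S = S_0 + \ddc\phi$, with $S_0$ smooth in the class $\{S\}$ and $\phi$ d.s.h., together with the constructions of Section \ref{sec-pull} to extend the identity.

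Second, I would bound the pairing $\int_X S \wedge (f^n)^*(\omega^{k-1})$ by a cohomological argument. It depends only on the classes $\{S\}$ and $\{(f^n)^*(\omega^{k-1})\}$, both of which lie in the pseudo-effective cones of the respective cohomology groups. Since the pseudo-effective cone in $H^{1,1}(X,\R)$ is closed and pointed and the mass functional $\alpha \mapsto \alpha\cdot\{\omega\}^{k-1}$ is strictly positive on it away from zero, the slice $\{\alpha \text{ pseudo-effective}: \alpha \cdot \{\omega\}^{k-1} = 1\}$ is compact, and similarly for $H^{k-1,k-1}(X,\R)$ with $\beta\cdot\{\omega\}$. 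By continuity of the intersection pairing on the product of these compact slices and bilinearity, this yields a constant $C > 0$ depending only on $X$ and $\omega$ such that
$$\int_X S \wedge T \leq C\,\|S\|\,\|T\|$$
for every positive closed $(1,1)$-current $S$ and every positive closed $(k-1,k-1)$-current $T$ on $X$.

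Finally, by definition of the dynamical degree one has $\|(f^n)^*(\omega^{k-1})\|^{1/n} \to d_{k-1}$, so for any $\epsilon>0$ there exists $c_\epsilon>0$ with $\|(f^n)^*(\omega^{k-1})\| \leq c_\epsilon(d_{k-1}+\epsilon)^n$. Combining, we get
$$\|(f^n)_*(S)\| = \int_X S \wedge (f^n)^*(\omega^{k-1}) \leq C\,\|S\|\,c_\epsilon(d_{k-1}+\epsilon)^n = Cc_\epsilon(d_{k-1}+\epsilon)^n,$$
which is the desired bound with $c = Cc_\epsilon$. I expect the delicate point to be the adjointness identity for singular $S$: since $f^n$ is a correspondence and its pull-back/push-forward on a general positive closed current are defined via d.s.h.\ potentials outside indeterminacy, one must separate the smooth and potential parts of $S$ and verify that no residual contribution is left by the singular locus. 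Note that algebraic stability of $f$ is not needed, because the dynamical degree $d_{k-1}$ is defined directly through $(f^n)^*$ on the graph of $f^n$ rather than through an iterate of $f^*$.
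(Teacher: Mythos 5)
Your proof is correct, but it takes a noticeably different route from the paper's. The paper first regularizes $S$ as a limit of differences $S_j^+-S_j^-$ of smooth positive closed forms with bounded masses and uses the continuity of $(f^n)_*$ on positive closed $(1,1)$-currents to reduce to smooth $S$; it then bounds the cohomology class of $(f^n)_*(S)$ by the operator norm of $(f^n)_*$ on $H^{1,1}(X,\C)$, which by Poincar\'e duality has the same growth rate $d_{k-1}$ as $(f^n)^*$ on $H^{k-1,k-1}(X,\C)$, and finally converts the class bound back into a mass bound. You instead pass through the adjunction identity $\|(f^n)_*(S)\|=\{S\}\cdot\{(f^n)^*(\omega^{k-1})\}$ (which, as you yourself indicate, must be read as a cup product of cohomology classes --- the current-level wedge $S\wedge(f^n)^*(\omega^{k-1})$ is not defined) and bound the intersection pairing by $C\,\|S\|\,\|(f^n)^*(\omega^{k-1})\|$ using compactness of the mass-normalized slices of the pseudo-effective cones. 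Both arguments ultimately rest on the same two facts, namely that mass and cohomology-class norm are equivalent on the cone of positive closed currents, and that $\|(f^n)^*(\omega^{k-1})\|^{1/n}\to d_{k-1}$; but your version avoids the regularization theorem altogether, at the price of having to justify the adjunction identity for singular $S$. That justification, which you only sketch, does go through: writing $S=S_0+\ddc\phi$ with $S_0$ smooth and $\phi$ d.s.h., the constructions of Section \ref{sec-pull} already give $\|(f^n)_*(S)\|=\|(f^n)_*(S_0)\|$, and for smooth $S_0$ the identity is the projection formula on a desingularization of the graph of $f^n$. Your remark that algebraic stability is not needed is also correct and consistent with the paper, since $d_{k-1}$ is defined directly from the graphs of the iterates.
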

\proof
We fix a norm on the cohomology group $H^*(X,\C)$ of $X$. Recall from introduction that 
$$\lim_{n\to\infty} \|(f^n)^*: H^{k-1,k-1}(X,\C) \to H^{k-1,k-1}(X,\C)\|^{1/n} = d_{k-1}.$$
By Poincar\'e duality, we get
$$\lim_{n\to\infty} \|(f^n)_*: H^{1,1}(X,\C) \to H^{1,1}(X,\C)\|^{1/n} = d_{k-1}.$$

By \cite{Demailly-regular,dinh-sibony-regular}, there are positive closed smooth $(1,1)$-forms $S_j^\pm$ on $X$ such that $\|S_j^\pm\|$ is bounded by a constant and 
$S_j^+-S_j^-\to S$ in the sense of currents. Since $(f^n)_*$ acts continuously on positive closed $(1,1)$-currents, we can replace $S$ by $S_j^\pm$ in order to assume that $S$ is smooth. 

Since $\|S\|=1$, its cohomology class in $H^{1,1}(X,\C)$ has a norm bounded by a constant $A$. Thus, the norm of the cohomology class of $(f^n)_*(S)$ in $H^{1,1}(X,\C)$ is bounded by
$$A\|(f^n)_*: H^{1,1}(X,\C) \to H^{1,1}(X,\C)\|.$$
We easily deduce that $\limsup_{n\to \infty} \|(f^n)_*(S)\|^{1/n}\leq d_{k-1}$  using the fact that 
the mass of a positive closed current only depends on its cohomology class. The lemma follows.
\endproof

We continue the proof of Theorem \ref{main-theorem}. Recall that $d_{k-1}<d_k$. According to the last lemma, replacing $f$ by an iterate (this doesn't change the measure $\mu$) allows us to assume that 
$$\kappa:=\sup \big\{\|f_*(S)\|:\, S \text{ positive closed $(1,1)$-current of mass 1}\big\} <d_k.$$
We fix a $\delta$ with $1<\delta<d_k/\kappa$ and   define $\Lambda:=\delta d_k^{-1} f_*$ acting on $\DSH(X)$. 
\vskip 5pt

 Recall from \cite{dinh-sibony-CMH} that $\phi\mapsto \int_X \phi\,d\mu$ is continuous on $\DSH(X)$. So we can replace the $\DSH$-norm by the following equivalent norm
$$\|\phi\|_\mu:=\Big|\int_X \phi \,d\mu\Big| + \inf\|T^\pm\|,$$
where the infimum is taken over all positive closed $(1,1)$-currents $T^\pm$  such that $\ddc \phi =T^+-T^-$.

Consider the hyperplane of $\DSH(X)$ defined by
$$E:=\Big\{\phi\in\DSH(X):\, \int_X \phi\,d\mu=0 \Big\}.$$

\begin{lemma}\label{3.4}
The space $E$ is invariant by $\Lambda$ and  we have $\|\Lambda\|_\mu\leq 1$. In particular, any ball of center $0$ in  $E$ is invariant by $\Lambda$.
\end{lemma}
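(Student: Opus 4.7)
My plan is to reduce both assertions to two inputs already set up in the excerpt: the equilibrium relation $f^*\mu=d_k\mu$, and the defining bound $\|f_*S\|\le\kappa\|S\|$ for every positive closed $(1,1)$-current $S$ (by homogeneity from the definition of $\kappa$). These will be combined with the continuity of the super-potential $\phi\mapsto\int\phi\,d\mu$ on $\DSH(X)$, which was recalled just before the statement.

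\textit{Step 1: invariance of $E$.} I would first establish the functional duality
\[
\int_X f_*(\phi)\,d\mu \;=\; d_k\int_X\phi\,d\mu \qquad \text{for every } \phi\in\DSH(X).
\]
For smooth $\phi$ this follows from a projection-formula computation on the desingularized graph:
\[
\int_X f_*(\phi)\,d\mu \;=\; \int_{\widetilde\Gamma}\Pi_1^*(\phi)\,\Pi_2^*(\mu) \;=\; \int_X \phi\,d(f^*\mu) \;=\; d_k\int_X\phi\,d\mu,
\]
using the factorization $f_*=(\Pi_2)_*\circ\Pi_1^*$ from Section \ref{sec-pull} and $f^*\mu=d_k\mu$. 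The identity then extends to $\phi\in\DSH(X)$ by continuity, since both sides are continuous in $\phi$ (the left-hand side via continuity of $f_*$ on $\DSH(X)$ followed by continuity of the super-potential). Consequently $\int_X\Lambda\phi\,d\mu=\delta\int_X\phi\,d\mu$, which vanishes exactly when $\phi\in E$, so $\Lambda(E)\subset E$.

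\textit{Step 2: norm bound on $E$.} For $\phi\in E$ and any decomposition $\ddc\phi=T^+-T^-$ into positive closed $(1,1)$-currents, I would use that $f_*$ commutes with $\ddc$ to write $\ddc(\Lambda\phi)=\delta d_k^{-1}(f_*T^+-f_*T^-)$, a valid positive/negative decomposition of $\ddc(\Lambda\phi)$. The defining inequality $\|f_*T^\pm\|\le\kappa\|T^\pm\|$, together with $\int_X\Lambda\phi\,d\mu=0$ from Step 1, then yields
\[
\|\Lambda\phi\|_\mu \;=\; \inf\|(\ddc\Lambda\phi)^\pm\| \;\le\; \frac{\delta\kappa}{d_k}\,\inf\|T^\pm\| \;=\; \frac{\delta\kappa}{d_k}\,\|\phi\|_\mu.
\]
Since $\delta<d_k/\kappa$, the factor $\delta\kappa/d_k$ is strictly less than $1$; in particular $\|\Lambda\|_\mu\le 1$ on $E$, so every ball of center $0$ in $E$ is $\Lambda$-invariant.

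The only delicate point will be justifying the duality in Step 1 for merely d.s.h.\ (rather than smooth) $\phi$: one must know that $\Pi_1^*(\phi)$ is integrable against $\Pi_2^*(\mu)$, which rests on the fact that $\mu$ is moderate and gives no mass to pluripolar sets. My preferred route is to prove the identity for smooth $\phi$ first via the projection formula on $\widetilde\Gamma$, and then pass to the limit using that smooth d.s.h.\ functions are dense in $\DSH(X)$ together with the continuity of both the super-potential of $\mu$ and $f_*\colon\DSH(X)\to\DSH(X)$. Once this duality is secured, the rest of the lemma is a direct computation.
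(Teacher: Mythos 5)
Your proposal is correct and follows essentially the same route as the paper: invariance of $E$ via the adjunction $\int_X f_*(\phi)\,d\mu=\int_X\phi\,d(f^*\mu)=d_k\int_X\phi\,d\mu$, and the norm bound via $\ddc(\Lambda\phi)=\delta d_k^{-1}(f_*T^+-f_*T^-)$ together with $\delta d_k^{-1}\kappa<1$. The paper states these two steps more tersely, while you supply the (correct) extra care about extending the duality from smooth to d.s.h.\ functions; the mathematical content is the same.
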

\proof
Since $\mu$ has no mass on pluripolar sets and $f^*(\mu)=d_k \mu$,  we have for $\phi\in E$,
$$\int_X \Lambda(\phi)\,d\mu = \int_X \delta d_k^{-1}f_*(\phi) \,d\mu =\delta \int_X  \phi\, d_k^{-1} \,df^*(\mu)=\delta  \int_X  \phi\,d\mu=0. $$
 So $E$ is invariant by $\Lambda$.

For the second assertion, it is enough to prove $\norm{\Lambda(T)}\leq \norm{T}$ for all positive closed $(1,1)$-current $T$. This  follows from the assumption that $\delta d_{k}^{-1}\kappa<1$.
\endproof

Now we can complete the proof of Theorem \ref{main-theorem}.

\begin{proof}[End of the proof of Theorem \ref{main-theorem}]
For $\phi,\psi\in\DSH(X)$ with the DSH-norm bounded by a fixed constant $M>1$, define $\phi':=d_k^{-1}f_*(\phi)-\phi$ and $\psi':=d_k^{-1}f_*(\psi)-\psi$. Then both $\phi'$ and $\psi'$ are in $E$ since $\mu$ is invariant by $d_k^{-1}f^*$. 

The  definition of $\phi'$ gives
\begin{align*}
 \sum_{n=0}^\infty  \int_X   d_k^{-n}(f^n)_*(\phi') \,\omega^k &= \sum_{n=0}^\infty  \int_X  d_k^{-n}(f^n)_*(d_k^{-1}f_*(\phi)-\phi) \,\omega^k \\
 &=\sum_{n=0}^\infty \Big[  \int_X   d_k^{-(n+1)}(f^{n+1})_*(\phi) \,\omega^k -   \int_X   d_k^{-n}(f^{n})_*(\phi)   \,\omega^k   \Big]\\
 &=\lim_{n\to \infty}   \int_X d_k^{-n}(f^n)_*(\phi) \,\omega^k -\int_X \phi \,\omega^k.
 \end{align*}
 Recall that $d_k^{-n}(f^n)^*(\omega^k)$ converges to $\mu$. So
 $$\lim_{n\to \infty} \int_X  d_k^{-n}(f^n)_*(\phi) \,\omega^k = \lim_{n\to \infty} \int_X \phi \, d_k^{-n}(f^n)^*(\omega^k)=\int_X \phi \, d\mu.$$
 Therefore, by the definition of $\Lambda$, we have 
 $$\int_X \phi \, d\mu=\int_X \phi \,\omega^k+ \sum_{n=0}^\infty  \int_X   d_k^{-n}(f^n)_*(\phi') \,\omega^k =\int_X \phi \,\omega^k + \sum_{n=0}^\infty  \delta^{-n} \int_X  \Lambda^n(\phi') \,\omega^k.$$
 The same equality holds after replacing $\phi,\phi'$ by $\psi,\psi'$.
 Hence 
$$
 \int_X \phi \,d\mu -\int_X \psi \,d\mu  =\int_X  (\phi-\psi)\, \omega^k+ \sum_{n=0}^\infty \delta^{-n} \int_X   \Lambda^n(\phi' -\psi') \,\omega^k.
$$

Notice that $\norm{\phi'}_{\DSH},\norm{\psi'}_{\DSH}$ are both bounded by a fixed constant times $M$. So $\phi'$ and $\psi'$ belong to a ball $\B$ (with respect to the $L^1$-norm) of center $0$ in $E$ with  radius  bounded by a constant times $M$. Moreover, by Lemmas \ref{theta-holder} and \ref{3.4}, $\Lambda$ is an $\theta$-H\"older map from $\B$ to $\B$ with respect to the $L^1$-distance for some $0<\theta\leq 1$. 
So we can apply  Lemma \ref{log-continuous} with $u(\phi):=\int_X \phi\, \omega^k$, getting
$$\Big|\sum_{n=0}^\infty \delta^{-n}  \int_X  \Lambda(\phi' -\psi') \,\omega^k \Big|\lesssim \big(\log^\star\norm{\phi'-\psi'}_{L^1}\big)^{-\gamma}$$
for some $\gamma>0$.

By using Lemma  \ref{theta-holder} again, we have
$$\norm{\phi'-\psi'}_{L^1}\leq \big\|d_k^{-1}f_*(\phi-\psi)  \big\|_{L^1} +\norm{\phi-\psi}_{L^1}\lesssim \norm{\phi-\psi}_{L^1}^\theta+  \norm{\phi-\psi}_{L^1} \lesssim  \norm{\phi-\psi}_{L^1}^\theta.$$
Here, we use that $\norm{\phi-\psi}_{L^1}\lesssim M$.

We conclude that 
$$
\Big| \int_X \phi \,d\mu -\int_X \psi \,d\mu \Big| \lesssim \norm{\phi-\psi}_{L^1}+\big(\log^\star\norm{\phi-\psi}_{L^1}^\theta\big)^{-\gamma}. 
$$

Since $\norm{\phi-\psi}_{L^1}\lesssim M$, to prove the desired inequality, it is enough to consider the case where $t:=\norm{\phi-\psi}_{L^1}\leq 1/2$. In this case, we have
$$t  \lesssim  (  1-\log t  )^{-\gamma}$$
and 
$$ (  1-\theta\log t )^{-\gamma} = \theta^{-\gamma}(  \theta^{-1}-\log t )^{-\gamma}  \leq  \theta^{-\gamma} (1-\log t)^{-\gamma}.   $$
We deduce that 
$$
\Big| \int_X \phi \,d\mu -\int_X \psi \,d\mu \Big| \leq c (1-\log t)^{-\gamma} =c( \log^\star t)^{-\gamma} $$
for some constant $c>0$ depending on $\theta$ and $\gamma$.
The proof of Theorem \ref{main-theorem} is now complete.
\end{proof}


\begin{thebibliography}{10}
	
	\bibitem{BD-state}
	Fabrizio Bianchi and Tien-Cuong Dinh.
	\newblock {E}quilibrium states of endomorphisms of $\mathbb{P}^k$ {I}:
	existence and properties.
	\newblock {\em {\tt arXiv:2007.04595}, to appear in J. Math. Pures Appl.},
	2023.
	
	\bibitem{semianalytic}
	Edward Bierstone and Pierre~D. Milman.
	\newblock Semianalytic and subanalytic sets.
	\newblock {\em Inst. Hautes \'{E}tudes Sci. Publ. Math.}, (67):5--42, 1988.
	
	\bibitem{Blanchard}
	Andr\'{e} Blanchard.
	\newblock Sur les vari\'{e}t\'{e}s analytiques complexes.
	\newblock {\em Ann. Sci. Ecole Norm. Sup. (3)}, 73:157--202, 1956.
	
	\bibitem{Demailly-regular}
	Jean-Pierre Demailly.
	\newblock Regularization of closed positive currents and intersection theory.
	\newblock {\em J. Algebraic Geom.}, 1(3):361--409, 1992.
	
	\bibitem{DKW:LLT}
	Tien-Cuong Dinh, Lucas Kaufmann, and Hao Wu.
	\newblock {R}andom walks on $\text{SL}_2(\mathbb{C})$: spectral gap and limit
	theorems.
	\newblock {\em {\tt arXiv:2106.04019}, to appear in Probab. Theory Related
		Fields}, 2021.
	
	\bibitem{DNT-ind}
	Tien-Cuong Dinh, Vi\^{e}t-Anh Nguy\^{e}n, and Tuyen~Trung Truong.
	\newblock Equidistribution for meromorphic maps with dominant topological
	degree.
	\newblock {\em Indiana Univ. Math. J.}, 64(6):1805--1828, 2015.
	
	\bibitem{dinh-sibony-regular}
	Tien-Cuong Dinh and Nessim Sibony.
	\newblock Regularization of currents and entropy.
	\newblock {\em Ann. Sci. \'{E}cole Norm. Sup. (4)}, 37(6):959--971, 2004.
	
	\bibitem{dinh-sibony-CMH}
	Tien-Cuong Dinh and Nessim Sibony.
	\newblock Distribution des valeurs de transformations m\'{e}romorphes et
	applications.
	\newblock {\em Comment. Math. Helv.}, 81(1):221--258, 2006.
	
	\bibitem{pull-back}
	Tien-Cuong Dinh and Nessim Sibony.
	\newblock Pull-back of currents by holomorphic maps.
	\newblock {\em Manuscripta Math.}, 123(3):357--371, 2007.
	
	\bibitem{dinh-sibony:cime}
	Tien-Cuong Dinh and Nessim Sibony.
	\newblock Dynamics in several complex variables: endomorphisms of projective
	spaces and polynomial-like mappings.
	\newblock In {\em Holomorphic dynamical systems}, volume 1998 of {\em Lecture
		Notes in Math.}, pages 165--294. Springer, Berlin, 2010.
	
	\bibitem{For-Sib}
	John~Erik Fornaess and Nessim Sibony.
	\newblock Complex dynamics in higher dimension. {II}.
	\newblock In {\em Modern methods in complex analysis ({P}rinceton, {NJ},
		1992)}, volume 137 of {\em Ann. of Math. Stud.}, pages 135--182. Princeton
	Univ. Press, Princeton, NJ, 1995.
	
	\bibitem{guedj}
	Vincent Guedj.
	\newblock Ergodic properties of rational mappings with large topological
	degree.
	\newblock {\em Ann. of Math. (2)}, 161(3):1589--1607, 2005.
	
	\bibitem{hormander}
	Lars H\"{o}rmander.
	\newblock {\em An introduction to complex analysis in several variables},
	volume~7 of {\em North-Holland Mathematical Library}.
	\newblock North-Holland Publishing Co., Amsterdam, third edition, 1990.
	
	\bibitem{Kau-mic}
	Lucas Kaufmann.
	\newblock A {S}koda-type integrability theorem for singular {M}onge-{A}mp\`ere
	measures.
	\newblock {\em Michigan Math. J.}, 66(3):581--594, 2017.
	
	\bibitem{Ma-book}
	Xiaonan Ma and George Marinescu.
	\newblock {\em Holomorphic {M}orse inequalities and {B}ergman kernels}, volume
	254 of {\em Progress in Mathematics}.
	\newblock Birkh\"{a}user Verlag, Basel, 2007.
	
	\bibitem{sibony-panor}
	Nessim Sibony.
	\newblock Dynamique des applications rationnelles de {$\bold P^k$}.
	\newblock In {\em Dynamique et g\'{e}om\'{e}trie complexes ({L}yon, 1997)},
	volume~8 of {\em Panor. Synth\`eses}, pages ix--x, xi--xii, 97--185. Soc.
	Math. France, Paris, 1999.
	
	\bibitem{Truong}
	Tuyen~Trung Truong.
	\newblock Relative dynamical degrees of correspondences over a field of
	arbitrary characteristic.
	\newblock {\em J. Reine Angew. Math.}, 758:139--182, 2020.
	
	\bibitem{viet}
	Duc-Viet Vu.
	\newblock Complex {M}onge-{A}mp\`ere equation for measures supported on real
	submanifolds.
	\newblock {\em Math. Ann.}, 372(1-2):321--367, 2018.
	
	\bibitem{Vu-tams}
	Duc-Viet Vu.
	\newblock Equilibrium measures of meromorphic self-maps on non-{K}\"{a}hler
	manifolds.
	\newblock {\em Trans. Amer. Math. Soc.}, 373(3):2229--2250, 2020.
	
\end{thebibliography}
\end{document}